\newtheorem{Th}{Theorem}[section]
\newtheorem{Lemma}[Th]{Lemma}
\newtheorem{Cor}[Th]{Corollary}
 \theoremstyle{definition}
\newtheorem{Def}[Th]{Definition}
\newtheorem*{rem}{Remark}
\newtheorem{?}[Th]{Problem}
\begin{document}
\author{Jingze Zhu}
\title{quantitative long range curvature estimate for mean curvature flow}
\bibliographystyle{amsalpha}
\begin{abstract}
  We prove that smooth convex $\alpha$-noncollapsed ancient mean curvature flow satisfies a quantitative curvature estimate $H(y,t)\leq CH(x,t)(H(x,t)|x-y|+1)^2$ for any pair of $x,y$.
  In other words, the rescaled curvature grows at most quadratically in terms of the rescaled extrinsic distance.
\end{abstract}
\address{Department of Mathematics, Columbia University, New York, NY, 10027}
\email{zhujz@math.columbia.edu}
\maketitle
\section{Introduction}
In the study of mean convex mean curvature flow, the smooth convex $\alpha$-noncollapsed ancient solutions play a crucial role, as they model the singularities of the mean convex flow. Such results were first proved by the seminal work
of White \cite{white2000size, white2003nature}. Sheng-Wang \cite{sheng2009singularity} later gave an alternative proof.

In 2013, Haslhofer-Kleiner \cite{haslhofer2017} found an interesting way to significantly simply the theory. They made the noncollapsedness an assumption, not the consequence. This is a crucial difference from the earlier work.
This assumption is reasonable because of the direct proof of noncollapsing property by Andrews \cite{andrews2012noncollapsing} using maximum principle.
One of the important results in \cite{haslhofer2017} is the long range curvature estimate.
For our purpose, we state a simplified version of this estimate:
\begin{Th}[c.f \cite{haslhofer2017} Theorem 1.10, Corollary 3.2]
\label{H-K Thm}
  Given $\alpha\in (0,1]$, there is an increasing function $\varphi:\mathbb{R}_{\geq 0}\rightarrow\mathbb{R}_{+}$ such that
  for any smooth convex ancient $\alpha$-noncollapsed mean curvature flow $M^n_t\subset\mathbb{R}^{n+1}$  and $x,y\in M_t$ we have
  \begin{align*}
     H(y,t) \leq H(x,t)\varphi(H(x,t)|x-y|)
  \end{align*}
\end{Th}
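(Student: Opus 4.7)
The plan is to argue by contradiction, using compactness in the class of $\alpha$-noncollapsed mean-convex flows. Suppose the claimed estimate fails: then there exist $D>0$ and a sequence of smooth convex $\alpha$-noncollapsed ancient flows $(M^i_t)$, times $t_i$, and points $x_i, y_i \in M^i_{t_i}$ such that $H(x_i,t_i)|x_i-y_i|\leq D$ while $\lambda_i := H(y_i,t_i)/H(x_i,t_i) \to \infty$. I would first normalize the situation by parabolically rescaling each flow by the factor $H(x_i,t_i)$ about $(x_i,t_i)$ and translating $(x_i,t_i)$ to the space-time origin. The rescaled flows $\tilde M^i_t$ remain smooth, convex, $\alpha$-noncollapsed, and ancient, with $\tilde H(0,0)=1$, and the rescaled bad points $\tilde y_i := H(x_i,t_i)(y_i-x_i)$ lie in the closed ball $\overline B_D(0)\subset\mathbb R^{n+1}$ with $\tilde H(\tilde y_i,0)=\lambda_i\to\infty$.

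The central tool I would invoke is the local regularity theorem for $\alpha$-noncollapsed mean-convex flows, a foundational result of the Haslhofer-Kleiner framework proved independently of the long-range estimate via Andrews's interior/exterior ball maximum principle combined with standard parabolic PDE estimates: on any parabolic cylinder on which $H\leq C$, all covariant derivatives of the second fundamental form are bounded in terms of $n$, $\alpha$, and $C$. Applied at the origin, where $\tilde H(0,0)=1$, this yields uniform $C^\infty$ estimates on a parabolic neighborhood $\mathcal P(0,r_0)$ with $r_0=r_0(n,\alpha)$. After passing to a subsequence and using convex-body compactness (together with the two-sided $C^{1,1}$ control supplied by the osculating balls), the $\tilde M^i_t$ converge smoothly on $\mathcal P(0,r_0)$ to a smooth convex $\alpha$-noncollapsed ancient limit $\tilde M^\infty_t$ with $\tilde H^\infty(0,0)=1$.

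The key step, and the main obstacle, is propagating this smooth convergence from $\mathcal P(0,r_0)$ to a neighborhood of the putative limit point $\tilde y_\infty=\lim_i\tilde y_i\in\overline B_D(0)$, where the rescaled curvatures blow up. I would handle this by iterating the local regularity theorem along $\tilde M^\infty_0$: once smooth convergence is established on a compact set $K\subset\tilde M^\infty_0$, the limit's curvature is uniformly bounded on $K$, the approximating curvatures inherit a bound on $K$ for large $i$, and local regularity then extends smooth convergence to a definite enlargement of $K$. Convexity and noncollapsing at the basepoint force $\tilde M^\infty_0$ to have controlled geometry on $\overline B_{2D}(0)$, so finitely many iterations (with step count depending on $D$ and $\alpha$) cover this ball. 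The resulting uniform $H$-bound on $\overline B_D(0)\cap\tilde M^i_0$ for all large $i$ then contradicts $\tilde H(\tilde y_i,0)\to\infty$, completing the contradiction argument and furnishing the desired increasing function $\varphi$.
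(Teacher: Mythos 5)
First, a point of reference: the paper does not prove Theorem \ref{H-K Thm} at all --- it is quoted from Haslhofer--Kleiner and used as a black box, and the paper's own Theorem \ref{Main Thm} is an independent, quantitative proof of a stronger statement in the ancient convex case. So your proposal must stand on its own, and it has a genuine gap at the propagation step --- which is exactly where the whole difficulty of a long-range estimate lives. The local curvature estimate you invoke is scale invariant: from $H(p,t)\leq r^{-1}$ it yields $H\leq Cr^{-1}$ only on a parabolic ball of radius $\rho r$, with $\rho=\rho(\alpha)<1<C=C(\alpha)$. Iterating it outward from the basepoint, the curvature bound multiplies by $C$ at each step while the step size shrinks by the same factor, so the total distance covered is at most $\rho\sum_{k\geq 0}C^{-k}=\rho C/(C-1)$, a constant depending only on $\alpha$. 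The iteration stalls at a definite distance from the origin and never reaches $\tilde y_\infty\in\overline{B}_D(0)$ once $D$ is large. Your claim that local regularity ``extends smooth convergence to a definite enlargement of $K$'' presupposes a uniform curvature bound for the limit on $\overline{B}_{2D}(0)$, and the assertion that ``convexity and noncollapsing at the basepoint force $\tilde M^\infty_0$ to have controlled geometry on $\overline{B}_{2D}(0)$'' is precisely the statement being proved: Hausdorff compactness of the convex bodies gives no curvature control, and a priori the limit's curvature could blow up at finite distance from the basepoint. (A smaller issue of the same flavor: your stated ``local regularity theorem'' assumes $H\leq C$ on a whole parabolic cylinder, yet you apply it knowing only $\tilde H(0,0)=1$; upgrading a pointwise curvature bound to a bound on a cylinder is itself the nontrivial Haslhofer--Kleiner local curvature estimate, whose proof runs through one-sided minimization and Brakke--White regularity, not only Andrews's maximum principle.)

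The missing global mechanism --- and the route this paper actually takes --- is star-shapedness rather than iteration. Noncollapsing at the basepoint $x$ produces a fixed interior ball $B_\alpha(p)\subset K_t$, which persists for all earlier times by mean convexity; Lemma \ref{Lemma1} then gives $\langle\nu(z),\omega\rangle\geq\alpha/(2L)$ for all $z$ near $y$ and all earlier times, with $L\leq|x-y|+\alpha$, uniformly over the whole class of flows. This makes the surface graphical at the \emph{definite} scale $\alpha/6$ near $y$ with a quantitative gradient bound, and Ecker--Huisken's interior estimate converts that directly into $H(y,t)\leq C(n)\alpha^{-3}L^{2}$, with no compactness and no limit flow. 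If you want to retain your contradiction framework, this uniform graphical radius near $\tilde y_i$ is the ingredient you must substitute for the ``definite enlargement'' step; as it stands, that step is circular.
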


Roughly speaking, Haslhofer-Kleiner's theorem allows one to compare the curvature uniformly in terms of their rescaled extrinsic distance. However, the control function $\varphi$ might be growing very quickly near infinity.

The goal of this note is to show that $\varphi$ grows at most quadratically, i.e, we can take $\varphi(s)=C(s+1)^2$. Here is our main theorem:
 \begin{Th}\label{Main Thm}
  Given $\alpha\in (0,1]$, there is a constant $C=C(\alpha, n)$ such that
  for any smooth convex ancient $\alpha$-noncollapsed mean curvature flow $M^n_t\subset\mathbb{R}^{n+1}$ and $x,y\in M_t$ we have
  \begin{align}\label{Long range curvature estimate}
     H(y,t) \leq CH(x,t)\left(1+H(x,t)|x-y|\right)^2
  \end{align}
\end{Th}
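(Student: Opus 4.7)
The plan is a contradiction and blow-up argument based on Theorem~\ref{H-K Thm} together with the convex $\alpha$-noncollapsed structure of the flow. By parabolic rescaling and translation we may assume $t=0$, $x=0$ and $H(x,0)=1$, so the claim becomes $H(y,0)\leq C(1+|y|)^2$ for every $y\in M_0$. For $|y|\leq 1$ this follows directly from Theorem~\ref{H-K Thm} with $C=\varphi(1)$, so only the far-field regime $R:=|y|\gg 1$ requires new input.

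Assume the conclusion fails. One extracts a sequence of smooth convex $\alpha$-noncollapsed ancient flows $M^k_t$ and points $y_k\in M^k_0$ with $H_k(0,0)=1$, $R_k:=|y_k|\geq 1$, and $M_k:=H_k(y_k,0)>k(1+R_k)^2$. Theorem~\ref{H-K Thm} bounds $M_k$ by $\varphi(R_k)$, so $R_k\to\infty$ along the sequence. Parabolically rescale $M^k$ at $y_k$ by $M_k$, setting $\widetilde M^k_\tau:=M_k(M^k_{\tau/M_k^2}-y_k)$. Then $\widetilde M^k$ is again smooth, convex, $\alpha$-noncollapsed and ancient, with $\widetilde H_k(0,0)=1$ at $\widetilde y_k=0$; the image $\widetilde x_k:=-M_ky_k$ of the original $x$ satisfies $|\widetilde x_k|=M_kR_k\to\infty$ and $\widetilde H_k(\widetilde x_k,0)=1/M_k\to 0$. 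By the Haslhofer--Kleiner compactness theorem, a subsequence converges smoothly on compact subsets to a smooth convex $\alpha$-noncollapsed ancient limit $\widetilde M^\infty$ with $\widetilde H^\infty(0,0)=1$. The escape point $\widetilde x_k$ does not appear in the limit, but leaves a strong geometric footprint in the pre-limit via the inscribed ball $B_k\subset\widetilde K^k$ of radius $\alpha M_k\to\infty$ furnished by noncollapsing at $\widetilde x_k$.

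The contradiction must come from combining the tangent hyperplane at $\widetilde y_k=0$ (which bounds $\widetilde K^k$ on one side) with the diverging inscribed ball $B_k$ at $\widetilde x_k$. Requiring $B_k$ to sit entirely on the interior side of that hyperplane already yields the one-shot estimate $|\widetilde x_k|\geq\alpha M_k(1-\langle\nu(\widetilde x_k),\nu(\widetilde y_k)\rangle)$. I would iterate this estimate along intermediate reference points on $\widetilde M^k$ carrying dyadic curvature values between $1$ and $1/M_k$: locating each reference point through Theorem~\ref{H-K Thm} and applying the convex-hull/inscribed-ball analysis at each scale should propagate both distance and normal-angle increments, converting the escape behaviour of $\widetilde x_k$ into a quantitative lower bound $R_k\geq c\sqrt{M_k}$, which contradicts $R_k<\sqrt{M_k/k}$ once $k$ is large.

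The main obstacle is the quantitative accounting of this iteration. A purely convex-geometric chain only controls the angle between normals weakly and recovers at best a rate like $R_k\geq c$, which matches the linear behaviour of the bowl soliton but is still short of the required $R_k\geq c\sqrt{M_k}$. Upgrading to the quadratic rate most likely requires additional MCF input specific to ancient convex noncollapsed flows -- for example Hamilton's differential Harnack inequality integrated along spacetime paths, rigidity of the asymptotic shrinking-cylinder tangent flow at $-\infty$, or a slab-type level-set estimate confining $\{\widetilde H_k\in[h,2h]\}$ to a slab of width of order $1/h$ around a reference hyperplane. Proving such a slab-type refinement of Theorem~\ref{H-K Thm}, tuned to the convex $\alpha$-noncollapsed setting, is what I expect to be the technical core of the argument.
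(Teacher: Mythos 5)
Your proposal has a genuine gap, and you identify it yourself: the compactness/contradiction scheme gets you to the point where you need a quantitative rate $R_k\geq c\sqrt{M_k}$, and you concede that your convex-geometric chain only yields $R_k\geq c$ and that "additional MCF input" is needed. That missing input is the whole theorem. Moreover, the blow-up framework is structurally ill-suited here: a compactness argument can rule out a qualitative failure but cannot by itself produce the explicit exponent $2$ in $(1+H(x,t)|x-y|)^2$; you would still have to run a direct quantitative estimate inside the argument, at which point the contradiction scaffolding is doing no work.

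The ingredient you are missing is Ecker--Huisken's interior curvature estimate, combined in a slightly different way with the same inscribed-ball geometry you already put your finger on. The paper's proof is direct: normalize $H(x,t)=1$, take the interior ball $B_\alpha(p)\subset K_t$ tangent at $x$ from noncollapsing, and note that $B_\alpha(p)\subset K_s$ for all $s\leq t$ since the enclosed domain is nonincreasing. For $L=|y-p|$ and $\omega=(y-p)/|y-p|$, an elementary supporting-hyperplane lemma (take the test point $r\nu$ inside the ball) gives $\langle\omega,\nu(z)\rangle\geq \alpha/(2L)$ for every $z\in M_s\cap B_{\alpha/6}(y)$, $s\leq t$; that is, the flow is uniformly graphical over the hyperplane perpendicular to $\omega$ in a spacetime neighborhood of $(y,t)$ of \emph{fixed} size $\alpha/6$, with gradient of order $L/\alpha$. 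Ecker--Huisken's interior estimate then bounds $|A(y,t)|^2\leq C(n)(R^{-2}+(t-t_0)^{-1})\sup\langle\omega,\nu\rangle^{-4}\leq C(n)\alpha^{-2}\cdot(2L/\alpha)^4$, i.e.\ $|A(y,t)|\leq C(n)\alpha^{-3}L^2$ with $L\leq |x-y|+\alpha$, which is exactly the quadratic bound. So your observation that the large inscribed ball at $x$ constrains the normal direction near $y$ is the right geometric lever, but the conversion from that gradient bound to a curvature bound is a local graphical interior estimate, not a Harnack inequality, a slab estimate, or an analysis of the tangent flow at $-\infty$.
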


By switching $x,y$, We can also get a quantitative curvature decay rate estimate:
\begin{Cor}\label{Main Cor}
  Setting up as in the Theorem \ref{Main Thm}, we can find $c=c(\alpha,n)>0$ such that
  \begin{align*}
    H(y,t) \geq c H(x,t)\left(1+H(x,t)|x-y|\right)^{-\frac{2}{3}}
  \end{align*}
\end{Cor}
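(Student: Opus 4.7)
The plan is to deduce the bound by applying Theorem \ref{Main Thm} with the roles of $x$ and $y$ reversed, and then to invert the resulting inequality algebraically. Since the estimate of Theorem \ref{Main Thm} is not symmetric between the two points, swapping them yields a genuinely new constraint, namely a lower bound on $H(y,t)$ in terms of $H(x,t)$ and $|x-y|$.

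Concretely, interchanging $x$ and $y$ in \eqref{Long range curvature estimate} gives
\[
H(x,t) \leq C\,H(y,t)\bigl(1+H(y,t)|x-y|\bigr)^{2}.
\]
Writing $a=H(x,t)$, $b=H(y,t)$, $r=|x-y|$, this reads $a \leq Cb(1+br)^{2}$, and the goal is $b \geq c\,a\,(1+ar)^{-2/3}$. I would carry out a case split on whether $br\leq 1$ or $br>1$. In the first regime the inequality degenerates to $b\geq a/(4C)$, which is stronger than the target because $(1+ar)^{-2/3}\leq 1$. In the second regime it expands to $b \geq (4C)^{-1/3} a^{1/3} r^{-2/3}$; a short sub-split on whether $ar\geq 1$ (using $(1+ar)^{-2/3}\leq(ar)^{-2/3}$) or $ar<1$ (using $b>a$, which follows from $br>1>ar$) then matches this with the desired bound.

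Taking $c$ to be the minimum of the constants produced by the three cases yields a uniform $c=c(\alpha,n)>0$. There is no real geometric obstacle here: the entire content sits in Theorem \ref{Main Thm}, and what remains is a routine algebraic inversion with a short case analysis. The only mildly subtle point is the mixed regime $br>1>ar$, where one must observe that these two inequalities together force $b>a$, after which the $(1+ar)^{-2/3}\le 1$ bound closes the case.
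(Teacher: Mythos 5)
Your proof is correct and follows essentially the same route as the paper: both arguments apply Theorem \ref{Main Thm} with the roles of $x$ and $y$ exchanged and then invert the resulting inequality by elementary algebra. The paper packages the inversion as a chain of inequalities ((\ref{Formula1})--(\ref{Formula3})) ending in $H(y,t)^3\left(1+H(y,t)|x-y|\right)^{-2}\leq CH(x,t)^3$ before swapping, whereas you swap first and run a three-case analysis; the content is the same.
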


The proof of Theorem \ref{Main Thm} relies on the Ecker-Huisken's
interior estimate \cite{ecker1991interior}. The convexity and noncollapsing property gives that
the surface is graphical in a small ambient ball $B_r(y)$.
We need to figure out the graphical radius $r$ carefully (it could be very small when $|x-y|$ is large) and then use Ecker-Huisken's interior estimate.

We want to mention that part of the argument is similar to those in \cite{brendle2017fully} (Section 5).
\newline

\noindent\textbf{Acknowledgements } The author would like to thank his advisor Simon Brendle for giving insightful ideas.

\section{Proof of the results}
Let $M^n_t$ be a smooth
ancient solution of mean curvature flow which bounds the convex domain $K_t$ in $\mathbb{R}^{n+1}$ and is $\alpha$-noncollapsed.

We recall the definition of the noncollapsing property:
\begin{Def}[c.f \cite{sheng2009singularity}, \cite{andrews2012noncollapsing}]
Let $M$ be a smooth mean convex hypersurface bounding a domain $K$ in $\mathbb{R}^{n+1}$.
Then $M$ is $\alpha$-noncollapsed, if for each $x\in M$ there is an interior ball $B\subset K$ and an exterior open ball $B'\subset K^c$ of radius $\alpha H(x)^{-1}$ with $x\in\partial B$ and $x\in\partial B'$.
\end{Def}

We need an elementary lemma for convex set:
\begin{Lemma}\label{Lemma1}
  Suppose that $K$ is a convex domain in $\mathbb{R}^{n+1}$ containing a ball $B_r(0)$  and
  $x\in \partial K$. Let $\omega = \frac{x}{|x|}$, then for any supporting plane $P$ at $x$ we have $\langle\nu,\omega\rangle\geq \frac{r}{|x|}$, where $\nu$ is the outward unit normal vector of $P$.
\end{Lemma}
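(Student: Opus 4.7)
The plan is to extract the inequality directly from the defining property of a supporting plane. Since $P$ is a supporting plane of the convex set $K$ at $x$ with outward unit normal $\nu$, every point $y \in K$ satisfies $\langle y - x, \nu \rangle \leq 0$, equivalently $\langle y, \nu \rangle \leq \langle x, \nu \rangle$. Combined with the hypothesis $B_r(0) \subset K$, this is essentially all we need.

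First I would apply this inequality to the specific test point $y = r\nu$, which lies in $\overline{B_r(0)} \subset K$. This immediately yields $r = \langle r\nu, \nu \rangle \leq \langle x, \nu \rangle$. Writing $x = |x|\omega$ and dividing by $|x|$ then gives the desired bound $\langle \nu, \omega \rangle \geq r/|x|$.

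There is no substantive obstacle here: the lemma is a one-line consequence of the definition of a supporting hyperplane once the optimal test point in the ball is chosen. The only minor technicality is that $r\nu$ sits on $\partial B_r(0)$ rather than in the open ball, but the supporting inequality is of the form $\leq$, so this closure point is admissible; if one insists on using the open ball, one takes $y = (r-\varepsilon)\nu \in B_r(0)$ and lets $\varepsilon \to 0^+$. Geometrically, the statement simply says that the farther $x$ is from the origin, the more the supporting plane must tilt toward the outward radial direction in order to keep the interior ball $B_r(0)$ on its inner side.
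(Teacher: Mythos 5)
Your proof is correct and is essentially identical to the paper's: both use the supporting-plane inequality $\langle x-y,\nu\rangle\geq 0$ for $y\in K$ with the test point $y=r\nu$. The remark about $r\nu$ lying on the boundary of the ball is a fine (if unnecessary) extra precaution.
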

\begin{proof}
  By the definition of the supporting hyperplane, $K$ lies in one side of $P$, consequently we have $\langle x-y,\nu\rangle\geq 0$ for any $y\in K$.
  Since $B_r(0)\subset K$, we can take $y = r\nu$, then the result follows.
\end{proof}

\begin{rem}
  If $\partial K$ is smooth, then $\nu$ is the
  outward normal of the tangent plane of $\partial K$ at $x$.
\end{rem}

\begin{proof}[Proof of Theorem \ref{Main Thm}]
  We may assume that the mean curvature is positive, for otherwise Theorem \ref{Main Thm} is equivalent to Theorem \ref{H-K Thm} (or by strong maximum principle the solution is flat)

  Since (\ref{Long range curvature estimate}) is scale invariant, we may assume without loss of generality that $H(x,t)=1$.

  By $\alpha$-noncollapsing assumption, there is a ball $B_{\alpha}(p)\subset K_t$ that is tangential to $M_t$ at $x$.

  Let $L=|y-p|\geq \alpha$ and $\omega = \frac{y-p}{|y-p|}$.

  Since the flow is mean convex, $K_t$ is a nonincreasing set.
  Consequently, $B_{\alpha}(p)\subset K_s$ for any $s\leq t$.
  Hence, for any $z\in M_s\cap B_{\frac{\alpha}{6}}(y)$ with $s\leq t$, we can apply Lemma \ref{Lemma1} to obtain
  \begin{align}\label{radial inner product}
    \langle \frac{z-p}{|z-p|}, \nu(z)\rangle\geq \frac{\alpha}{|z-p|}\geq \frac{5\alpha}{6L}
  \end{align}
  Moreover, we have the following elementary inequality:
  \begin{align}\label{radial inner product difference}
    \left|\frac{z-p}{|z-p|}-\omega\right| = & \left|\frac{z-p}{|z-p|}-\frac{y-p}{|y-p|}\right|\\
    \leq& \frac{|z-y|}{|y-p|} + \left|1-\frac{|z-p|}{|y-p|}\right| \notag\\
    \leq& \frac{\alpha}{3L} \notag
  \end{align}

  Combining (\ref{radial inner product}) and (\ref{radial inner product difference}) we have
  \begin{align}\label{nu omega}
    \langle \omega, \nu(z)\rangle \geq \frac{\alpha}{2L}
  \end{align}

  In particular, $B_{\frac{\alpha}{6}}(y)\cap M_s$ is a graph over
  the plane perpendicular to $\omega$, whenever $s\leq t$.

  Now we let
  $R=\frac{\alpha}{6}$, $t_0 = t - \frac{\alpha^2}{400n}$ and

  $$K(y,s,R) = \{z\in M_{s} \big| |z-y|^2+2n(s-t_0)\leq R^2\}$$.

  By Ecker-Huisken's interior estimate (c.f \cite{ecker1991interior} Corollary 3.2):
  \begin{align}\label{Ecker Huisken Interior}
    \sup\limits_{K(y,t,R/2)} |A|^2\leq C(n) (R^{-2}+(t-t_0)^{-1}) \sup\limits_{s\in[t_0,t]}\sup\limits_{K(y,s,R)} \langle \omega, \nu\rangle^{-4}
  \end{align}
  Note that in the left hand side, $(y,t)\in  K(y,t, R/2)$.
  In the right hand side, each time slice is contained in $B_{\frac{\alpha}{6}}(y)$.
  Hence by (\ref{nu omega}) (\ref{Ecker Huisken Interior}) we have:
  \begin{align}
     |A(y,t)|\leq C(n)\alpha^{-1} \left(\frac{2L}{\alpha}\right)^2 = C(n)\alpha^{-3}L^2
  \end{align}

  Note that by convexity $0\leq H\leq \sqrt{n}|A|$. Moreover, $L=|y-p|\leq |x-y|+\alpha$.
  Consequently,
  \begin{align*}
    H(y,t)\leq C(n,\alpha)H(x,t) (1+H(x,t)|x-y|)^2
  \end{align*}
  The theorem is proved.

\end{proof}

We now describe how to derive Corollary \ref{Main Cor}:
First by (\ref{Long range curvature estimate}) we see that
either $H(y,t)\leq CH(x,t)$ or $H(x,t)|x-y|\geq 1$, hence:
\begin{align}\label{Formula1}
  H(y,t) \leq CH(x,t) + H(x,t)H(y,t)|x-y|
\end{align}
It follows that
\begin{align}\label{Formula2}
  \frac{1+H(x,t)|x-y|}{1+H(y,t)|x-y|}H(y,t)\leq CH(x,t)
\end{align}
Here $C$ might be different from line to line, but only depends on $n,\alpha$.

Next, combining (\ref{Long range curvature estimate}) and (\ref{Formula2}) we obtain:
\begin{align}\label{Formula3}
   &H(y,t)\left(\frac{H(y,t)}{1+H(y,t)|x-y|}\right)^2 \\
   \leq &CH(x,t)\left(\frac{1+H(x,t)|x-y|}{1+H(y,t)|x-y|}H(y,t)\right)^2 \notag \\
   \leq &CH(x,t)^3 \notag
\end{align}
The result follows from swapping $x,y$ in (\ref{Formula3}) and taking cubic root.
\bibliography{BIBQUANTITAVIECURVATURE}

\end{document}